\renewcommand{\S}{\mathcal{S}}
\begin{document}

\setlist[description]{font=\normalfont\space}

\pgfdeclarelayer{background}
\pgfdeclarelayer{foreground}
\pgfsetlayers{background,main,foreground}

\newtheorem{thm}{Theorem}
\newtheorem{cor}{Corollary}
\newtheorem{lmm}{Lemma}
\newtheorem{conj}{Conjecture}
\newtheorem{pro}{Proposition}
\newtheorem{Def}{Definition}
\theoremstyle{remark}\newtheorem{Rem}{Remark}

\title{Bijections for Ranked Tree-Child Networks}
\author{Alessandra Caraceni\\
    Istituto Nazionale di Alta Matematica\\
    Unit\`a di ricerca SNS\\
    P.zza dei Cavalieri 7, Pisa\\
    Italy \and
    Michael Fuchs\thanks{Supported by MOST under the research grant MOST-109-2115-M-004-003-MY2.}\\
    Department of Mathematical Sciences\\
    National Chengchi University\\
    Taipei 116\\
    Taiwan \and
    Guan-Ru Yu\\
    Department of Mathematics\\
    National Kaohsiung Normal University\\
    Kaohsiung 824\\
    Taiwan}

\maketitle

\begin{abstract}
The class of ranked tree-child networks,  tree-child networks arising from an evolution process with a fixed embedding into the plane, has recently been introduced by Bienvenu, Lambert, and Steel. These authors derived counting results for this class. In this note, we will give bijective proofs of three of their results. Two of our bijections answer questions raised in their paper.
\end{abstract}

\section{Introduction}\label{intro}

Phylogenetic trees and networks are important discrete structures from biology where they are used to model evolution; see \cite{HuRuSc,SeSt,St}. Of these two types of structures, phylogenetic trees are simpler and more classical but they are less suitable to model evolutionary scenarios that involve reticulation events. Thus, in many recent studies, they have been replaced by (the more general) phylogenetic networks. However, the majority of the classes of phylogenetic networks are not recursive and thus they are a poor model for processes that evolve over time. In order to be able to model such processes, Bienvenu et al.~recently proposed the class of {\it ranked tree-child networks}; see \cite{BiLaSt}.

We recall some definitions. First, a (rooted, binary) {\it phylogenetic network} is a simple DAG (directed acyclic graph) with a unique root of indegree $0$ and outdegree $1$ such that all other nodes belong to one of the following types:
\begin{itemize}
\item {\it leaves}, which are nodes with indegree $1$ and outdegree $0$; these are bijectively labeled by the set $\{1,\ldots,\ell\}$ where $\ell$ is the total number of leaves;
\item {\it tree nodes}, which are nodes with indegree $1$ and outdegree $2$; and
\item {\it reticulation nodes}, which are nodes with indegree $2$ and outdegree $1$.
\end{itemize}
A phylogenetic network is called a {\it tree-child} network if each internal (i.e., non-leaf) node has at least one child which is not a reticulation node. For such a network, we call a tree node whose two children are both not reticulation nodes a {\it branching event} and a reticulation node together with both its parents a {\it reticulation event}; see Figure~\ref{events}-(a). A {\it ranked tree-child network} (or RTCN for short) is now a tree-child network which is drawn in such a way that one starts with a branching event and then successively adds either a branching event or a reticulation event until the leaves are reached; see Figure~\ref{events}-(b).

\vspace*{0.25cm}
\begin{figure}[t!]
\centering
\includegraphics[scale=0.9]{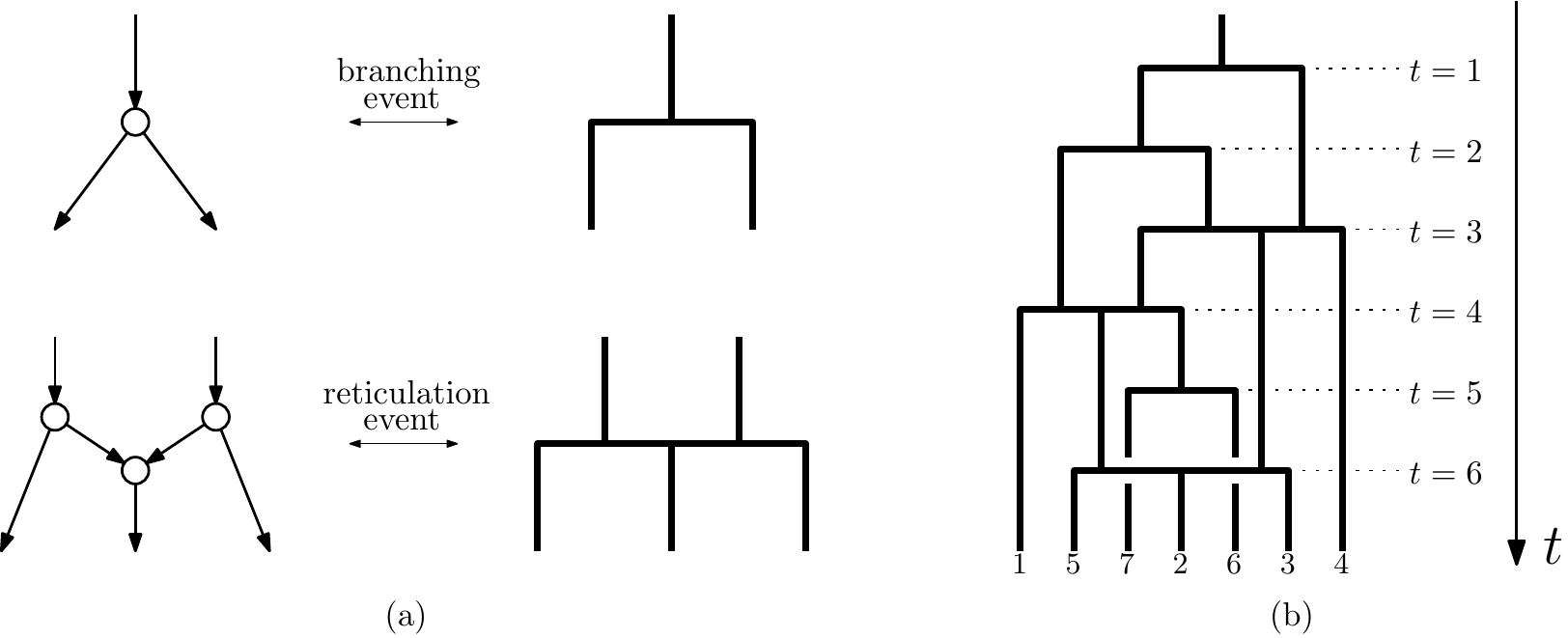}
\caption{(a) The two types of events of a ranked tree-child network; (b) An example of a RTCN.}\label{events}
\end{figure}

The class of RTCNs has the advantage over other network classes that the special embedding into the plane makes counting relatively straightforward. (In contrast, few of the other classes of phylogenetic networks have so far been counted; see, e.g., \cite{BoGaMa,DiSeWe,FuGiMa1,FuGiMa2,FuHuYu,FuYuZh1,FuYuZh2} for some recent progress.) For instance, the following simple formula was obtained in \cite{BiLaSt} for the number $\mathrm{RTC}_{\ell}$ of RTCNs with $\ell$ leaves:
\begin{equation}\label{num-of-RTCN}
\mathrm{RTC}_{\ell}=\frac{\ell!(\ell-1)!^2}{2^{\ell-1}}.
\end{equation}
Curiously, the same number is the answer to the following counting problem: find the number of ways for $\ell$ people to cross a river with a two-person boat where the boat trips follow the pattern $2$ send, $1$ returns, $2$ send, $1$ returns, etc; see A167484 in the OEIS. Thus, one can ask for a bijection between RTCNs and such boat sequences; however, in Remark 2.8 in \cite{BiLaSt} it was claimed that a natural bijection seems to be unlikely because for $\ell=3$, $3$ out of the $6$ RTCNs contain a reticulation event and $3$ do not (see bottom resp. top row of Figure~\ref{bij-l-3}) whereas all $6$ possible boat trips are completely equivalent. Nevertheless, we will give a simple bijection in the next section; again see Figure~\ref{bij-l-3} for the correspondence between RTCNs and boat sequences for $\ell=3$ arising from our bijection.

\begin{figure}[h!]
\centering
\includegraphics[scale=0.9]{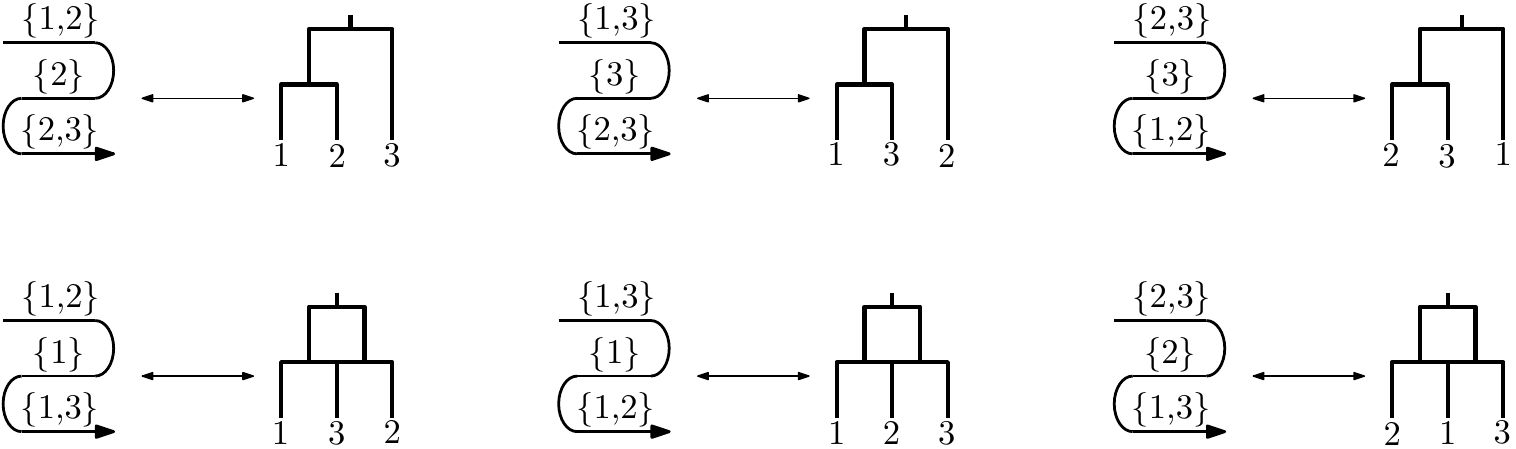}
\caption{All RTCNs and their corresponding boat sequences for $\ell=3$.}\label{bij-l-3}
\end{figure}

Using this bijection, branching events are, e.g., mapped to the following events of boat sequences. Assume that the $\ell$ people are ranked according to their skill at steering the boat. Then, the number of branching events corresponds to the number $X_{\ell}$ of return trips where the most skilled person among those on the opposite shore takes the boat back. Since it was proved in Corollary 2.7 in \cite{BiLaSt} that the number of branching events in a RTCN chosen uniformly at random from all RTCNs with $\ell$ leaves satisfies a central limit theorem, we obtain the following result.

\begin{thm}\label{only-thm}
For a boat sequence of $\ell$ people chosen uniformly at random from all boat sequences, the number of times $X_{\ell}$ that the most skilled person takes the boat back satisfies the limit law:
\[
\frac{X_{\ell}-\log\ell}{\sqrt{\log\ell}}\stackrel{d}{\longrightarrow}N(0,1),
\]
where $\stackrel{d}{\longrightarrow}$ denotes convergence in distribution and $N(0,1)$ denotes the standard normal distribution.
\end{thm}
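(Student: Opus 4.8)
The plan is to reduce the statement to the central limit theorem for branching events established in \cite{BiLaSt}, transporting it through the bijection constructed in the previous section. The essential point is that this map is a genuine bijection between the (finite) set of RTCNs with $\ell$ leaves and the (finite) set of boat sequences for $\ell$ people; consequently it carries the uniform distribution on RTCNs to the uniform distribution on boat sequences, so that a uniformly random boat sequence corresponds to a uniformly random RTCN.

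First I would make precise the correspondence anticipated in the introduction: under the bijection, the branching events of an RTCN are matched exactly with the return trips in which the most skilled person currently on the far shore takes the boat back. Granting this, the number of branching events of an RTCN equals the value of $X_\ell$ on its image boat sequence. Because a bijection between finite sets automatically pushes the uniform measure forward to the uniform measure, the statistic $X_\ell$ evaluated on a uniformly random boat sequence and the number of branching events $B_\ell$ evaluated on a uniformly random RTCN have the same distribution, i.e.\ $X_\ell \stackrel{d}{=} B_\ell$.

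With this distributional identity the theorem is immediate: Corollary 2.7 in \cite{BiLaSt} states precisely that $(B_\ell-\log\ell)/\sqrt{\log\ell}\stackrel{d}{\longrightarrow}N(0,1)$, and since $X_\ell$ and $B_\ell$ are equidistributed the same limit law transfers verbatim to $X_\ell$. The only step that requires genuine care is the verification of the branching-event correspondence, which depends on tracing how the bijection of the previous section encodes the local structure of an RTCN (branching versus reticulation events) into the successive boat trips; once that is checked, the remaining implications are purely formal consequences of measure preservation, and no new probabilistic estimates are needed.
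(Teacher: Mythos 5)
Your proposal is correct and matches the paper's argument exactly: the paper also transfers Corollary 2.7 of \cite{BiLaSt} through the RTCN--boat-sequence bijection, noting that Operation (O-i) (a branching event) is precisely the case where the entry $\{\ell\}$ of maximal rank is appended to the second row of the rank array, i.e.\ the most skilled person on the opposite shore returns, so $X_\ell$ is equidistributed with the number of branching events. The one step you flag as needing care (the branching-event correspondence) is indeed the only substantive check, and it is immediate from the synchronization rules (i) and (ii) in the construction of the bijection.
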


In fact, the number $\mathrm{RTC}_{\ell,b}$ of RTCNs with $\ell$ leaves and $b$ branching events was counted in \cite{BiLaSt} as well:
\[
\mathrm{RTC}_{\ell,b}=\genfrac{[}{]}{0pt}{}{\ell-1}{b}\cdot\mathrm{RT}_{\ell},
\]
where the bracket denotes the (signless) Stirling numbers of the first kind and $\mathrm{RT}_{\ell}=\ell!(\ell-1)!/2^{\ell-1}$ is the number of ranked trees (i.e., RTCNs without reticulation events). Since the Stirling numbers of the first kind count permutations with a fixed number of cycles, this result suggests that there should be a simple bijection between RTCNs with a fixed number of branching events and pairs consisting of permutations with a fixed number of cycles and ranked trees. We will give such a bijection in Section~\ref{RTCN-fixed-b}.

\begin{figure}[t!]
\centering
\includegraphics{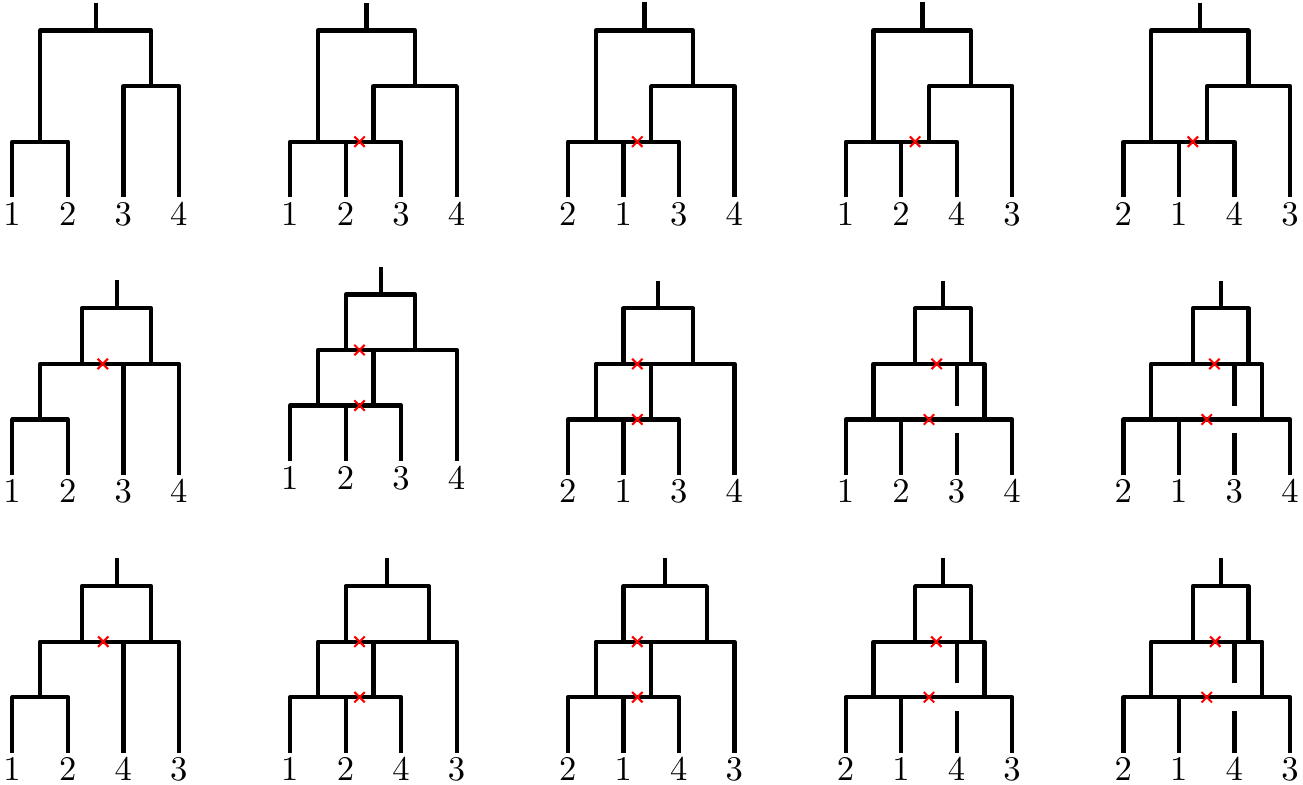}
\caption{A ranked tree $T$ (upper left corner) with $\ell=4$ leaves and all $15$ RTCNs which contain $T$ where incoming edges of reticulation nodes which need to be removed to obtain $T$ are indicated by arrows. (Note that one of the RTCNs is $T$ itself.)}
\label{tree-cont}
\end{figure}

A final bijection discussed in this note is related to another result in \cite{BiLaSt} which concerns containment of ranked trees in RTCNs. Here, we say that a ranked tree $T$ is contained in a RTCN $N$, in symbols $T\sqsubset N$, if $T$ can be obtained from $N$ by choosing one of the incoming edges of each reticulation node of $N$, removing them, and then suppressing resulting nodes with indegree $1$ and outdegree $1$; see Figure~\ref{tree-cont}.

For a fixed ranked tree $T$ with $\ell$ leaves, it was proved in \cite{BiLaSt} that
\begin{equation}\label{num-tree-contain}
\#\{N\ :\ T\sqsubset N\}=1\cdot 3\cdot 5\cdots(2\ell-3)=:(2\ell-3)!!.
\end{equation}
Note that $(2\ell-3)!!$ is the number of phylogenetic trees with $\ell$ leaves where a phylogenetic tree is a phylogenetic network without reticulation nodes; see, e.g., Corollary 2.2.4 in \cite{SeSt}. Thus, one again would like to have a simple bijection between RTCNs which contain $T$ and phylogetetic trees; see Remark 3.7 in \cite{BiLaSt}. In the final section of this note, we will give such a bijection.

\section{RTCNs and boat sequences}

In this section, we will describe a bijection between RTCNs with $\ell$ leaves and boat sequences involving $\ell$ people. This will be done in three steps. First, we will explain how to construct all RTCNs with $\ell+1$ leaves from those with $\ell$ leaves; then, we will do the same for boat sequences; and finally we will synchronize these two constructions to get the desired bijection.

\paragraph{Recursive construction of RTCNs.} Here, we give a top-down construction which (uniquely) produces all RTCNs with $\ell+1$ leaves. A similar construction was described in \cite{BiLaSt} where the authors used the concept of decorated RTCNs. We will sidestep this concept which gives a (slightly) simpler construction.

\begin{figure}
\centering
\includegraphics[scale=0.7]{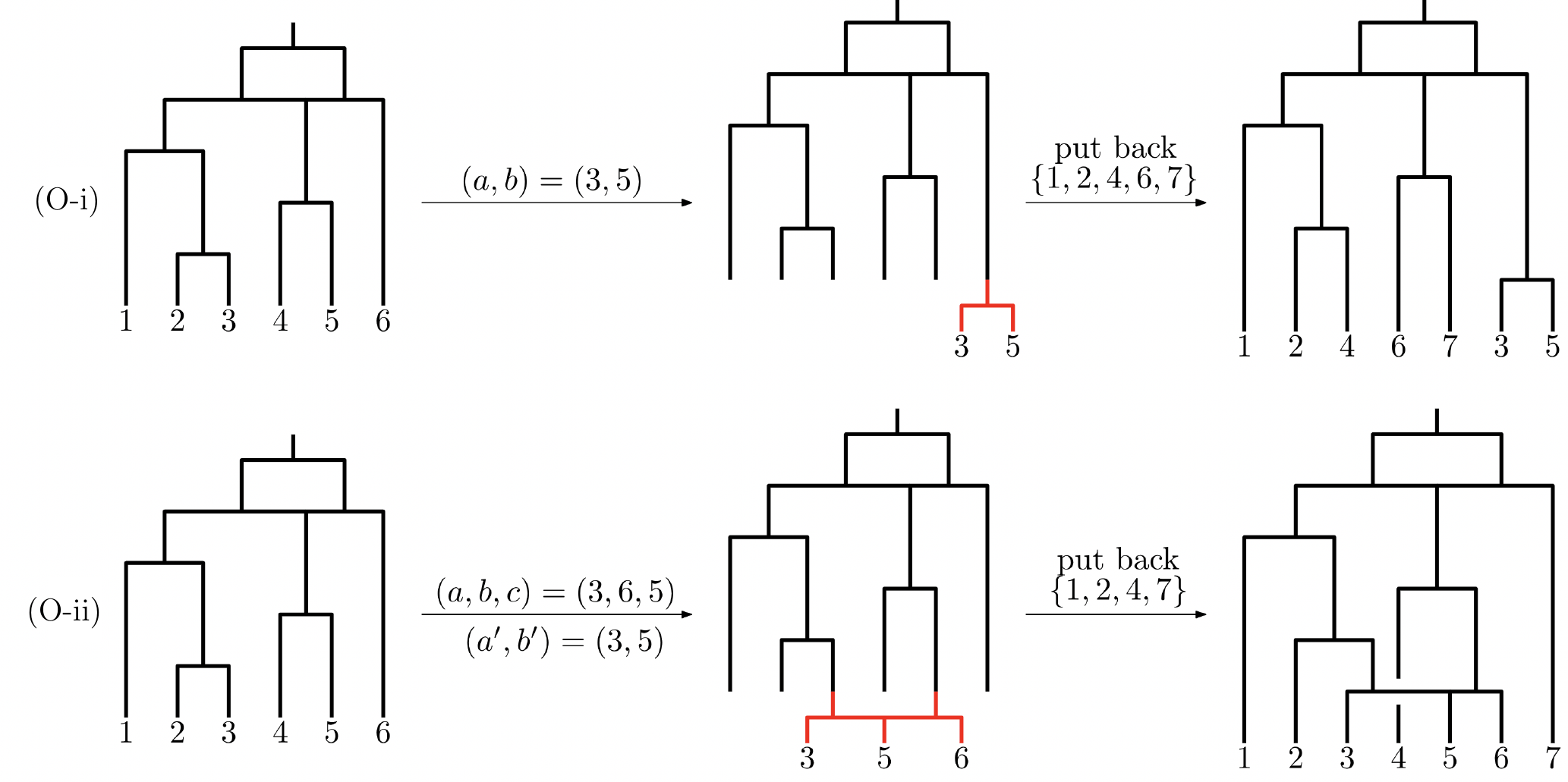}
\caption{The two operations (O-i) and (O-ii) in the recursive construction of RTCNs.}\label{op-rtcn}
\end{figure}

Assume we have a given RTCN with $\ell$ leaves. Then, we perform either one of the following two operations (see Figure~\ref{op-rtcn} for examples).
\begin{description}
\item[(O-i)] Pick two labels $a,b$ from the set $\{1,\ldots,\ell+1\}$, remove the label $\ell$ from the RTCN and attach to its leaf two children with labels $a,b$. Finally, relabel all the other leaves of the RTCN in an order-consistent way with the remaining labels from $\{1,\ldots,\ell+1\}\setminus\{a,b\}$.
\item [(O-ii)] Pick two labels $a<b$ from $\{1,\ldots,\ell+1\}$ and a label $c$ from $\{1,\ldots,\ell+1\}\setminus\{a,b\}$. Next, find the relative ranks, say $a'<b'$, of $a<b$ in $\{1,\ldots,\ell+1\}\setminus\{c\}$. Remove $a',b'$ from the RTCN and attach a reticulation event to their leaves with the children corresponding to the leaf which had the label $a'$ being a leaf with label $a$ and the reticulation vertex, the children corresponding to the leaf which had the label $b'$ being a leaf with label $b$ and the (same) reticulation vertex and the child of the reticulation vertex being a leaf with label $c$. Finally, relabel all the other leaves of the RTCN in an order-consistent way with the labels from $\{1,\ldots,\ell+1\}\setminus\{a,b,c\}$.
\end{description}

Since both of these operations are reversible, we have the following result.
\begin{pro}\label{rec-rtcn}
Starting from all RTCNs with $\ell$ leaves and performing for each of them all the operations above gives (exactly once) all the RTCNs with $\ell+1$ leaves.
\end{pro}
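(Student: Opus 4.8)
The plan is to turn the authors' remark that (O-i) and (O-ii) are reversible into a genuine bijection. Concretely, I would show that the map sending a triple (an RTCN $N$ with $\ell$ leaves, a choice of operation, a choice of its parameters) to the resulting $(\ell+1)$-leaf RTCN is a bijection onto the set of all RTCNs with $\ell+1$ leaves; the phrase ``exactly once'' in the statement is precisely the assertion that this map is a bijection. It therefore suffices to construct an explicit inverse.

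First I would dispatch the routine forward checks, namely that each operation really outputs an $(\ell+1)$-leaf RTCN. In (O-i) the expanded leaf becomes a tree node with two leaf children, a branching event whose children are leaves, so the tree-child property is preserved and the event can legitimately be appended as the latest one in the ranking. In (O-ii) the two expanded leaves become tree nodes $p_1,p_2$ that retain the leaf children $a$ and $b$, while the new reticulation $r$ has the leaf child $c$; tree-childness again holds, no directed cycle is created since everything is attached below the rest of the network, and the reticulation event is appended as the latest event. In each case exactly one leaf is consumed and the number of leaves grows by one.

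The substance of the argument is the inverse map. Given an RTCN $N'$ with $\ell+1$ leaves, I would single out its unique latest (highest-rank) event $E$. Since no event sits below $E$ in time, every node created by $E$ is a leaf of $N'$: if $E$ is a branching event it is a tree node both of whose children are leaves, and if $E$ is a reticulation event then the reticulation child and the two leaves hanging off its parents are all leaves. Deleting $E$ and relabelling then reverses exactly one operation --- (O-i) when $E$ branches and (O-ii) when $E$ reticulates --- and the only point requiring care is the recovery of parameters. In the branching case I read off the unordered pair $\{a,b\}$ of labels of the two leaf children, turn the parent back into a leaf, and relabel it $\ell$ after contracting the remaining labels order-consistently onto $\{1,\dots,\ell-1\}$. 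In the reticulation case I set $c$ to be the label of the reticulation's leaf child and $a<b$ to be the two leaf labels at its parents; the relative ranks $a',b'$ that these parents must carry in $N$ are then forced (they are the ranks of $a$ and $b$ inside $\{1,\dots,\ell+1\}\setminus\{c\}$), and the rule that the parent of the $a$-labelled leaf becomes the leaf $a'$ identifies the two parents unambiguously. Because $E$ is the maximal event, the truncated and relabelled object is again a valid ranked tree-child network with $\ell$ leaves, tree-childness surviving since the modified tree nodes turn into leaves, so every remaining internal node keeps a non-reticulation child.

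It remains to verify that these reverse operations are two-sided inverses of the forward ones: applying (O-i) or (O-ii) to $N$ and then stripping off the freshly created latest event restores $N$ together with its parameters, while stripping the latest event of $N'$ and reapplying the matching operation rebuilds $N'$ with its ranking intact. This exhibits the desired bijection and hence shows that every $(\ell+1)$-leaf RTCN is produced exactly once. I expect the only genuinely delicate step to be the label bookkeeping for (O-ii): one must check that the forward and reverse order-consistent relabellings are mutually inverse and that the dictionary between the absolute labels $a,b,c$ and the relative ranks $a',b'$ is a bijection. Everything else --- preservation of tree-childness and of the ranking, the leaf count, and the uniqueness of the latest event --- follows immediately from the structure of ranked networks.
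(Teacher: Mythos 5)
Your proof is correct and takes essentially the same approach as the paper: the paper justifies Proposition~\ref{rec-rtcn} with the single observation that operations (O-i) and (O-ii) are reversible, and your argument is precisely an elaboration of that remark, constructing the inverse explicitly by stripping off the unique latest event and recovering the parameters $\{a,b\}$ (resp.\ $a$, $b$, $c$ and the relative ranks $a'$, $b'$). The label bookkeeping you flag as the delicate point is exactly the detail the paper leaves implicit, and your treatment of it is sound.
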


Moreover, this construction also immediately gives (\ref{num-of-RTCN}).

\begin{cor}
We have
\[
\mathrm{RTC}_{\ell}=\frac{\ell!(\ell-1)!^2}{2^{\ell-1}}.
\]
\end{cor}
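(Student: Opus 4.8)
The plan is to convert Proposition~\ref{rec-rtcn} into a recurrence for $\mathrm{RTC}_\ell$ by counting how many RTCNs with $\ell+1$ leaves each RTCN with $\ell$ leaves produces. Since Proposition~\ref{rec-rtcn} guarantees that every RTCN with $\ell+1$ leaves arises \emph{exactly once} through this construction, I do not need to worry about distinctness or overcounting; I only need the total number of admissible choices in the two operations for a single fixed RTCN with $\ell$ leaves. This number will turn out to depend only on $\ell$ and not on the network, so that $\mathrm{RTC}_{\ell+1}$ is simply this count multiplied by $\mathrm{RTC}_\ell$.

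First I would count the choices in \textbf{(O-i)}. Here one selects an unordered pair of labels $\{a,b\}\subseteq\{1,\dots,\ell+1\}$ to attach as the two children of a branching event, which gives $\binom{\ell+1}{2}$ possibilities; the subsequent order-consistent relabeling is forced and contributes no further freedom. Next I would count \textbf{(O-ii)}: one selects $a<b$ from $\{1,\dots,\ell+1\}$ in $\binom{\ell+1}{2}$ ways and then a third label $c$ from the remaining $\ell-1$ labels in $\ell-1$ ways, with the rest of the construction again determined. Adding the two gives
\[
\binom{\ell+1}{2}+\binom{\ell+1}{2}(\ell-1)=\binom{\ell+1}{2}\,\ell=\frac{\ell^2(\ell+1)}{2}
\]
choices, independent of the starting RTCN.

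Combining this with Proposition~\ref{rec-rtcn} yields the recurrence
\[
\mathrm{RTC}_{\ell+1}=\frac{\ell^2(\ell+1)}{2}\,\mathrm{RTC}_{\ell},
\]
which, together with the base case $\mathrm{RTC}_2=1$ (the unique network consisting of a single branching event with leaves $1,2$), I would solve by a one-line induction: assuming $\mathrm{RTC}_\ell=\ell!(\ell-1)!^2/2^{\ell-1}$ and checking
\[
\frac{\ell^2(\ell+1)}{2}\cdot\frac{\ell!(\ell-1)!^2}{2^{\ell-1}}=\frac{(\ell+1)!\,\ell!^2}{2^{\ell}}
\]
reproduces the claimed formula with $\ell$ replaced by $\ell+1$.

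The only real care needed is in the counting step. I must make sure that the two children in (O-i) are treated as an unordered pair, so that the contribution is $\binom{\ell+1}{2}$ rather than $(\ell+1)\ell$, and that in (O-ii) the convention $a<b$ correctly fixes which label hangs below which parent of the reticulation vertex, so that no configuration is counted twice. Everything downstream of these counts (the relabelings in both operations) is deterministic, and it is precisely the independence of the count from the chosen network that lets the recurrence factor as a clean product. Thus the main obstacle here is bookkeeping rather than anything conceptual.
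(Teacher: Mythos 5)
Your proof is correct and takes essentially the same approach as the paper: you count the $\binom{\ell+1}{2}+\binom{\ell+1}{2}(\ell-1)=\ell^2(\ell+1)/2$ admissible choices in operations (O-i) and (O-ii), observe this count is independent of the network, and combine it with Proposition~\ref{rec-rtcn}. The only (cosmetic) difference is that the paper telescopes this count directly into a product over all sizes, whereas you state it as a recurrence with base case $\mathrm{RTC}_2=1$ and verify the closed form by induction; the underlying computation is identical.
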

\begin{proof}
The number of different ways to perform (i) and (ii) above is
\[
\binom{\ell+1}{2}+\binom{\ell+1}{2}(\ell-1)=\frac{(\ell+1)\ell^2}{2}.
\]
Thus,
\[
\mathrm{RTC}_{\ell}=\prod_{j=1}^{\ell-1}\frac{(\ell+1)\ell^2}{2}=\frac{\ell!(\ell-1)!^2}{2^{\ell-1}}
\]
which proves the claim.
\end{proof}

\begin{figure}[h!]
\centering
\includegraphics{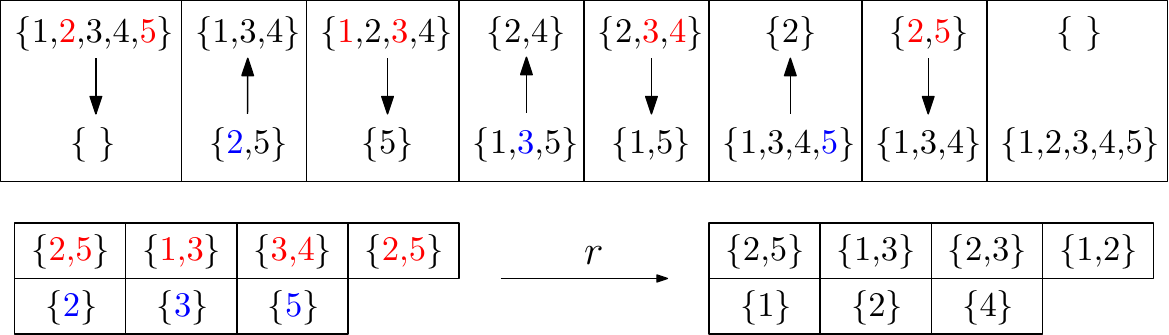}
\caption{An example of a boat sequence involving $5$ people. Top: the people on both sides of the shores after each step. Bottom: the representation of the boat sequence as array and the array of rankings obtained from it by applying the map $r$.}
\label{boat-to-arrays}
\end{figure}

\vspace*{-0.1cm}
\paragraph{Recursive construction of boat sequences.} Here, we explain how all boat sequences involving $\ell+1$ people can be constructed from those involving $\ell$ people. (Note that this reveals a recursive structure inherent to boat sequences that was not considered in \cite{BiLaSt}.)

First, note that a boat sequence involving $\ell$ people can be represented by an array with $2$ rows where the first row has $\ell-1$ entries and the second has $\ell-2$ entries, namely, the first row contains the set of numbers of the people sent in the $i$-th step and the second row contains the singleton of the number of the person who returned in the $i$-th step; see bottom left array in Figure~\ref{boat-to-arrays} for an example.

Now, each such array can be mapped to a corresponding array where the numbers are replaced by the relative ranks of the people within the group on their shore, e.g., if $\{2,5\}$ are sent with $\{2,3,5,6\}$ at the same shore, then $\{2,5\}$ is replaced by $\{1,3\}$. Call the resulting map $r$; see bottom of Figure~\ref{boat-to-arrays}.

Note that the map $r$ is clearly a bijection from the above arrays representing boat sequences to arrays where in the first row, we have a subset of size $2$ of $\{1,\ldots,\ell\}$, followed by a subset of size $2$ of $\{1,\ldots,\ell-1\}$, etc. until the set $\{1,2\}$ and in the second row, we have a subset of size $1$ of $\{1,2\}$, followed by a subset of size $1$ of $\{1,2,3\}$, etc. until a subset of size $1$ of $\{1,\ldots,\ell-1\}$.

Now, we apply the following operations on the latter arrays: we shift all entries of the first row to the right by one position, add a subset of size $2$ of $\{1,\ldots,\ell+1\}$ in the (now empty) first position and add a subset of size $1$ of $\{1,\ldots,\ell\}$ at the end of the second row; see the downwards arrow in Figure~\ref{rec-boat}.

Finally, by using the inverse of $r$, we obtain an array for a boat sequence involving $\ell+1$ people; see the bottom left array in Figure~\ref{rec-boat}.

\begin{figure}
\centering
\includegraphics{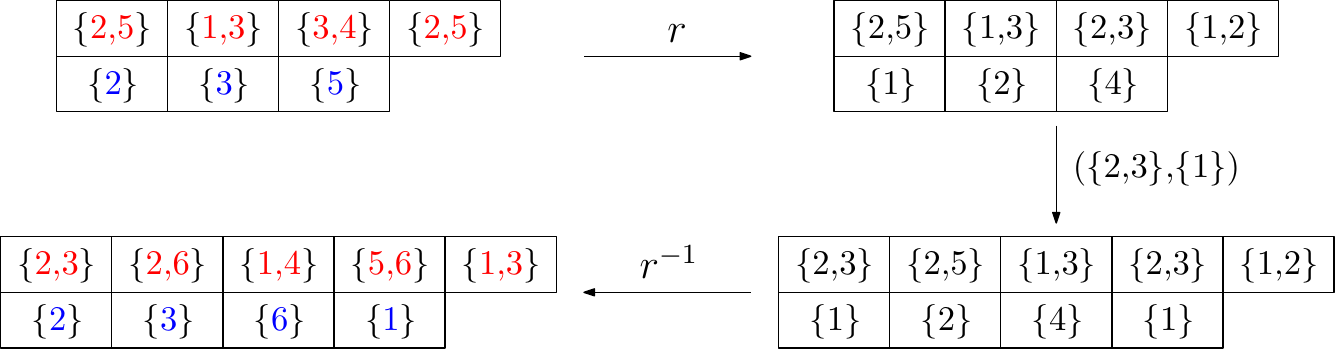}
\caption{A boat sequence with $6$ people constructed from the boat sequence with $5$ people from Figure~\ref{boat-to-arrays}.}
\label{rec-boat}
\end{figure}

Since the above process is reversible, we obtain the following result.

\begin{pro}
Starting from a boat sequence involving $\ell$ people and using the map $r$, all possible operations above, and the inverse of $r$, we obtain (exactly once) all boat sequences involving $\ell+1$ people.
\end{pro}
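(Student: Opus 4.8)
The plan is to establish that the recursive construction of boat sequences produces each boat sequence with $\ell+1$ people exactly once, which follows from showing that the described process is a bijection between the set of valid arrays for $\ell$ people (together with a choice of ``extension data'') and the set of valid arrays for $\ell+1$ people. First I would make precise the target set of arrays. After applying the map $r$, a boat sequence for $\ell$ people corresponds to an array in which the first row is a sequence of $2$-subsets drawn respectively from $\{1,\ldots,\ell\}, \{1,\ldots,\ell-1\},\ldots,\{1,2\}$ and the second row is a sequence of singletons drawn respectively from $\{1,2\},\{1,2,3\},\ldots,\{1,\ldots,\ell-1\}$. Since $r$ is a bijection (as already noted in the excerpt), it suffices to prove that the shift-and-prepend operation on these ``rank arrays'' is a bijection onto the corresponding rank arrays for $\ell+1$ people.

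Next I would verify that the operation indeed lands in the correct target. The extension prepends a $2$-subset of $\{1,\ldots,\ell+1\}$ to the first row and appends a $1$-subset of $\{1,\ldots,\ell\}$ to the second row; after the rightward shift, the first row becomes a sequence of $2$-subsets drawn from $\{1,\ldots,\ell+1\},\{1,\ldots,\ell\},\ldots,\{1,2\}$ and the second row a sequence of singletons drawn from $\{1,2\},\ldots,\{1,\ldots,\ell\}$. This is exactly the description of a rank array for $\ell+1$ people, so the map is well defined into the target set.

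The core of the argument is reversibility, which I would establish by exhibiting the explicit inverse: given a rank array for $\ell+1$ people, one reads off and deletes the first $2$-subset of the first row and the last singleton of the second row (recording them as the extension data), then shifts the remaining first-row entries left by one position, obtaining a rank array for $\ell$ people. Since deleting the prepended/appended entries and shifting back exactly undoes the construction, and since every valid rank array for $\ell+1$ people has a (nonempty) first-row leading entry and a final second-row entry to strip off, this inverse is defined on all of the target set and composes with the forward map to the identity in both directions. Composing with $r$ and $r^{-1}$ on the two ends then transfers this bijection to the level of genuine boat sequences, yielding that each boat sequence with $\ell+1$ people arises exactly once.

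The main obstacle I anticipate is purely bookkeeping rather than conceptual: one must check carefully that the index ranges of the subsets match up after the shift, in particular that the newly prepended $2$-subset correctly occupies the range $\{1,\ldots,\ell+1\}$ while every previously present first-row entry has its ambient range increased by exactly one slot, and symmetrically for the second row. Once this index accounting is confirmed, reversibility is immediate and the proposition follows; there is no delicate estimate or nontrivial combinatorial identity to overcome.
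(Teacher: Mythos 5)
Your proposal is correct and follows essentially the same route as the paper, which proves the proposition simply by observing that the shift-and-prepend operation on rank arrays is reversible; you merely spell out the index bookkeeping (that after the rightward shift each old first-row entry lands in a slot whose required ambient set $\{1,\ldots,\ell+1-i\}$ matches its own, and that stripping the first $2$-subset and last singleton inverts the construction) that the paper leaves implicit.
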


\begin{figure}[h!]
\centering
\includegraphics[scale=0.9]{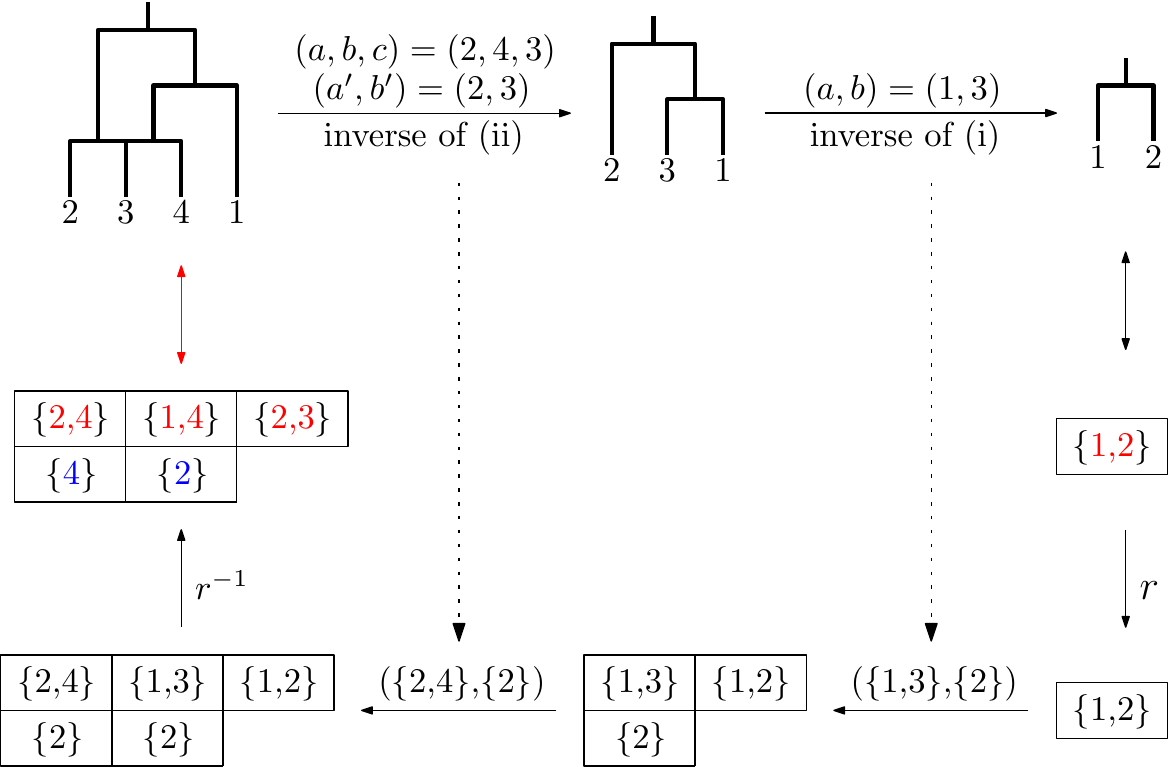}
\caption{A RTCN (top left corner) with $\ell=4$ and the process of constructing its image under the bijection. Top: the two steps of reducing the RTCN to the one with $\ell=2$. Right-most column: The boat sequence corresponding to the initial case and its array of rankings. Bottom: the process of extending the array of rankings. Left-most column: applying $r^{-1}$ gives the desired result (middle).}
\label{RTCN-to-boat}
\end{figure}

\paragraph{Synchronization of the two constructions.} We will now explain how to synchronize the constructions from the above two paragraphs to get a bijection between RTCNs and boat sequences. This bijection is defined recursively.

Assume we have given a RTCN on $\ell+1$ leaves. Reverse the recursive construction from Proposition~\ref{rec-rtcn} to obtain a RTCN on $\ell$ leaves. By induction hypothesis, this RTCN is mapped on a boat sequence which is represented by an array as described in the previous paragraph. We apply the map $r$ to this array. Now, depending on which of the two operations was used in Proposition~\ref{rec-rtcn} to construct the RTCN on $\ell+1$ leaves from that of $\ell$ leaves, we do the following to the array of ranks.
\begin{description}
\item[(i)] If Operation (O-i) was used, then move all the pairs from the first row by one position and add $\{a,b\}$ as the first entry. Also, add $\{\ell\}$ at the end of the second row.
\item[(ii)] If Operation (O-ii) was used, then again move all the pairs from the first row by one position and add $\{a,b\}$ as the first entry. Add the relative rank of $c$ in $\{1,\ldots,\ell+1\}\setminus\{a,b\}$ to the end of the second row.
\end{description}

Finally, apply the inverse of $r$ to the above array to obtain the desired boat sequence.

More generally, in order to get the corresponding boat sequence for a RTCN on $\ell$ leaves, we first have to reduce it to the RTCN on $2$ leaves by applying Proposition~\ref{rec-rtcn} $\ell-2$ times and retain the operations from the first paragraph for each step. Then, we can start from the (trivial) boat sequence and use the above procedure $\ell-2$ times to construct the corresponding boat sequence; see Figure~\ref{RTCN-to-boat} for an example.

Also, from the above, it is clear that branching events correspond to steps where the person with the maximal rank was picked for the return trip (or, using the language introduced just before Theorem~\ref{only-thm}, the most skilled person on the opposite shore). This proves Theorem~\ref{only-thm} from the introduction.

\section{Branching events, permutations and ranked binary trees}\label{RTCN-fixed-b}
\begin{figure}[t]
\centering
\includegraphics{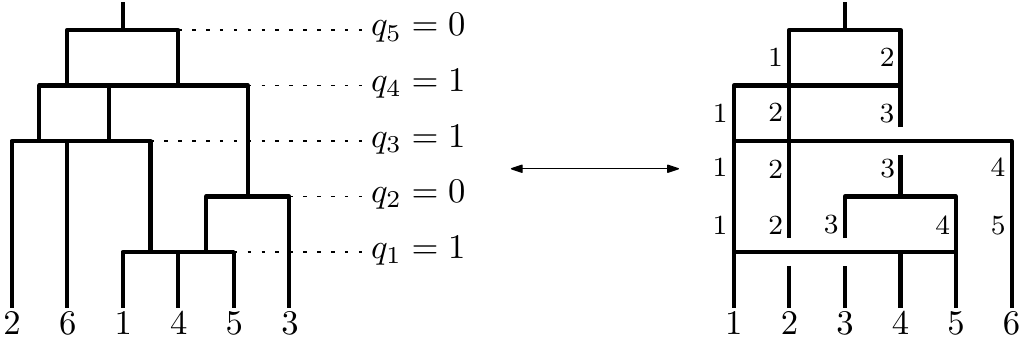}
\caption{\label{fig:RTCN with numbered events} Left: a RTCN with $\ell=6$ leaves and $k=3$ reticulation events; the profile equals $(q_i)_{i=1}^{5}=(1,0,1,1,0)$ and the sequence of positions of reticulation events equals $(a_i)_{i=1}^{3}=(1,3,4)$. Right: the same RTCN redrawn as described in Section~\ref{RTCN-fixed-b}.}
\end{figure}

In this section, we describe an explicit bijection between the set of RTCNs with $\ell$ leaves and $k$ reticulation events (equivalently, with $\ell-1-k$ branching events) and the set of pairs $(T, \sigma)$ where $T$ is a ranked binary tree with $\ell$ leaves and $\sigma$ is a permutation of $\{1,\ldots,\ell-1\}$ consisting of $\ell-1-k$ disjoint cycles.

\paragraph{From a RTCN to a ranked binary tree and permutation.}

Given a RTCN with $\ell$ leaves labeled $1$ to $\ell$, define its profile as the sequence $(q_i)_{i=1}^{\ell-1}\in\{0,1\}^{\ell-1}$ where $q_i=0$ if and only if the $i$-th event from the bottom is a branching event (see left of Figure~\ref{fig:RTCN with numbered events}). Note that we necessarily have $q_{\ell-1}=0$, as only two lineages (i.e., vertical edges) are available to merge for the last event. Suppose $\sum_{i=1}^{\ell-1}q_i=k$, that is, that the RTCN under consideration has $k$ reticulation events, and let $a_1<a_2<\ldots<a_k$ be the indices $i\in \{1,\ldots,\ell-2\}$ such that $q_i=1$, corresponding to the reticulation events in question (note that the reticulation event $a_i$ happens at time $\ell-a_i$, with the notation from Figure~\ref{events}-(b)).

Next, note that we can consider lineages above (and below) each event to be naturally ordered as follows. Assuming that lineages just below event $i$ are labeled $1$ to $\ell-i+1$ and that the event is a branching event, label the $\ell-i$ lineages just above the event with the integers $1$ to $\ell-i$ so that their order reflects the order after the branching event, with the parent lineage identified with its child whose label is smaller. Similarly, assuming $i$ is a reticulation event, order the lineages present just above the event $i$ consistently with their order after the event (ignoring the hybrid lineage resulting from the reticulation event). This is by no means the only possible convention (other conventions would yield equivalent bijections), but we may use it to determine the left-to-right order of lineages when the RTCN is drawn in the plane (so that we can say that the $\ell-i$ lineages above event $i$ are labeled ``left-to-right'' with the numbers $1$ to $\ell-i$). The planar embedding induced by this convention is depicted on the right of Figure~\ref{fig:RTCN with numbered events}: the RTCN on the left is redrawn on the right with lineages ordered left to right according to their current labels, which are explicitly indicated above and below each event; in the representation on the right, parent lineages of reticulation events are drawn directly above the respective non-hybrid child lineages.

In order to obtain a ranked binary tree, we will replace each of the $k$ reticulation events of the RTCN with a branching event; at the same time, we shall keep track of additional information allowing the recovery of the original reticulation event.

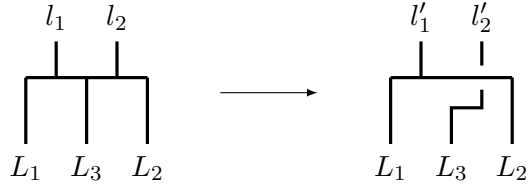
\begin{figure}\centering
\begin{tikzpicture}[xscale=.8, yscale=.8]
\node at (0,0) {$L_1$};
\node at (1,0) {$L_3$};
\node at (2,0) {$L_2$};
\node at (0.5,2.5) {$l_1$};
\node at (1.5,2.5) {$l_2$};
\draw[very thick] (0,.4)--(0,1.5) (1,.4)--(1,1.5) (2,.4)--(2,1.5) (0,1.5)--(2,1.5) (0.5,1.5)--(0.5,2.1) (1.5,1.5)--(1.5,2.1);

\draw[->,>=latex] (3.2,1.25)--(4.8,1.25);

\node at (6,0) {$L_1$};
\node at (7,0) {$L_3$};
\node at (8,0) {$L_2$};
\node at (6.5,2.5) {$l_1'$};
\node at (7.5,2.5) {$l_2'$};
\draw[very thick] (6,.4)--(6,1.5) (7,.4)--(7,1)--(7.5,1)--(7.5,1.3) (8,.4)--(8,1.5) (6,1.5)--(8,1.5) (6.5,1.5)--(6.5,2.1) (7.5,1.7)--(7.5,2.1);
\end{tikzpicture}
\caption{\label{fig:replace reticulation with branching}A reticulation event is replaced by a branching event: the hybrid lineage $L_3$ becomes the future of the former rightmost parent lineage, which is no longer involved in the event. Note that the number $L_3$ need not be between $L_1$ and $L_2$: for examples where it is not, see the second and third transformations performed in Figure~\ref{fig:example of 3-step bijection}.}
\end{figure}

\vspace*{0.1cm}
\begin{figure}[t]\centering
\includegraphics[scale=1]{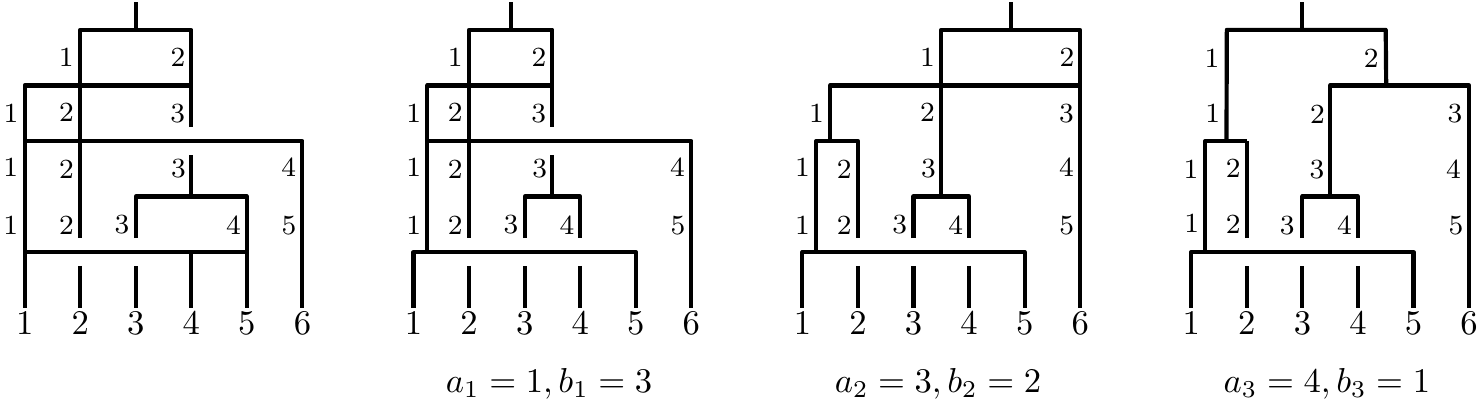}
\caption{\label{fig:example of 3-step bijection}From the RTCN displayed to the far left to its corresponding ranked binary tree obtained on the right, with its accompanying permutation $\sigma=(a_1,a_1+b_1)(a_2,a_2+b_2)(a_3,a_3+b_3)=(1,4)(3,5)(4,5)=(1,5,3,4)(2)$, which is the product of $\ell-1-k=6-1-3=2$ disjoint cycles. The three subsequent steps turn reticulation events 1, 3, 4 into branching events. The values of $(L_1,L_2,L_3,l_1,l_2,l_1',l_2')$ are $(1,5,4,1,4,1,4)$ for the first step, $(1,2,4,1,2,1,3)$ for the second, and $(2,3,1,1,2,2,1)$ for the third.}
\end{figure}

Build the corresponding ranked binary tree to our RTCN by considering in turn the events $a_1,\ldots, a_k$. The result of the reticulation event $a_1$ is given by two lineages $L_1<L_2$, labeled $l_1<l_2$ \emph{above} the reticulation event, and a third hybrid lineage $L_3$; replace this by a branching event that yields $L_1,L_2$ from a single parent lineage $P$ and leaves $L_3$ intact. Some convention is needed in order to match lineages $P, L_3$ to lineages $l_1, l_2$; we identify $P$ with $l_1$ and assign the evolution of the hybrid lineage $L_3$ to the lineage $l_2$ (Figure~\ref{fig:replace reticulation with branching}). Labels assigned to lineages below event $a_1$ are unchanged by this procedure, but labels above the event are recomputed according to the rules we explained previously. In particular, labels immediately above the event $a_1$ range between $1$ and $\ell-a_1$; the lineage that used to be labeled $l_1$ is now labeled $l_1'$ and the lineage that used to be labeled $l_2$ is now labeled $l_2'$ where the values of $l_1', l_2'$ depend on $L_1, L_2, L_3$: if $L_3<L_1$, then $l_1'=L_1=l_1+1$ and $l_2'=L_3$; if $L_2<L_3$, then $l_1'=L_1=l_1$ and $l_2'=L_3-1$; if $L_1<L_3<L_2$, then $l_1'=L_1=l_1$ and $l_2'=L_3$.

The result of this operation is a RTCN $T_1$ with $k-1$ reticulation events. In order to be able to reinstate the original reticulation, it is enough to keep track of $a_1$ (identifying the branching event to be turned back into a reticulation event) and a number $b_1$ that identifies the lineage to turn back into the original hybrid. Note that the label $l_2'$, for example, would allow the recovery of the original RTCN; $l_2'$ is an integer between $1$ and $\ell-a_1$, but cannot take all values in the range independently of $T_1$, as it cannot be equal to $l_1'$, that is, to the label of the parent lineage of the branching event $a_1$. We therefore define the number $b_1$ as $l_2'$ if $l_2'<l_1'$ and $l_2'-1$ otherwise. This way, we have $1\leq b_1\leq \ell-1-a_1$; since $l_1'$ can be determined from $T_1$, the pair $(a_1,b_1)$ is enough to recover $l_2'$ and thus the original RTCN (as we will further discuss later).

We repeat this operation for each reticulation to obtain a final ranked binary tree $T_k$ and a sequence $(a_1,b_1),\ldots,(a_k,b_k)$ where $1\leq b_i\leq \ell-1-a_i$: see Figure~\ref{fig:example of 3-step bijection} for an example.

This sequence of $k$ pairs can be naturally encoded by the permutation $\sigma_k=(a_1,b_1+a_1)\cdot (a_2, b_2+a_2)\cdot \ldots \cdot (a_k,a_k+b_k)$ in the symmetric group $\S_{\ell-1}$ (where we adopt the convention that composition of permutations is obtained via multiplication from the right); this is the product of $k$ transpositions such that $a_1<\ldots<a_k$ and $b_i+a_i>a_i$. The latter condition is actually equivalent to the permutation being a product of $\ell-1-k$ disjoint cycles, as clarified by the following lemma:
\begin{lmm}\label{lemma 1}
For all integers $n>k>0$, a permutation $\sigma\in \S_n$ is the product of $n-k$ disjoint cycles if and only if it can be expressed as a product of transpositions of the form $(x_1,y_1)\cdot\ldots\cdot(x_k,y_k)$ where $0<x_1<\ldots <x_k<n$ and $x_i<y_i\leq n$ for $1\leq i \leq k$; moreover, if such an expression for $\sigma$ exists, then it is unique.
\end{lmm}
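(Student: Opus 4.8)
The plan is to prove the slightly stronger statement that the map $\Phi$ sending a word of transpositions $(x_1,y_1)\cdots(x_k,y_k)$ with $0<x_1<\cdots<x_k<n$ and $x_i<y_i\le n$ to the permutation it spells out is a \emph{bijection} from the set of such admissible words onto the set $B_{n,k}$ of permutations in $\S_n$ with exactly $n-k$ cycles (fixed points counted as $1$-cycles). A bijection delivers all three assertions at once: that the image lies in $B_{n,k}$ is the forward implication (factorization $\Rightarrow$ $n-k$ cycles), surjectivity is the existence of a factorization, and injectivity is its uniqueness. I would argue by induction on $k$ with $n$ kept as a parameter, the base case being $k=0$, where the only admissible word is empty and $B_{n,0}=\{\mathrm{id}\}$.

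The crux is the observation that the global condition $x_1<\cdots<x_k$ together with $y_i>x_i$ forces every symbol occurring in the word to be at least $x_1$, with $x_1$ itself occurring only in the first transposition. Hence $\sigma$ fixes $1,\dots,x_1-1$, and since $(x_2,y_2)\cdots(x_k,y_k)$ fixes $x_1$, one gets $\sigma(x_1)=y_1$. Consequently $x_1$ is \emph{forced} to be the least non-fixed point of $\sigma$ and $y_1=\sigma(x_1)$: the first transposition is recoverable from $\sigma$ alone, which is the key both to uniqueness and to the inductive step. In the reverse direction, starting from an arbitrary $\sigma\in B_{n,k}$ with $k\ge1$ (so $\sigma\neq\mathrm{id}$), I would \emph{define} $x_1:=\min\{i:\sigma(i)\neq i\}$ and $y_1:=\sigma(x_1)$; minimality of $x_1$ forces $y_1>x_1$, and $\sigma(x_1)\le n$ forces $x_1<n$.

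I would then set $\sigma':=(x_1,y_1)\,\sigma$ and check two points. First, $\sigma'$ fixes $1,\dots,x_1$: symbols below $x_1$ are untouched and $\sigma'(x_1)=(x_1,y_1)(y_1)=x_1$, so the least non-fixed point of $\sigma'$ exceeds $x_1$. Second, $\sigma'\in B_{n,k-1}$, which is the only place cycle counting enters and rests on the standard fact that multiplying a permutation by a transposition $(a,b)$ changes its number of cycles by exactly $\pm1$, increasing it precisely when $a,b$ lie in a common cycle. Since $\sigma(x_1)=y_1$, the symbols $x_1,y_1$ share a cycle of $\sigma$, so $\sigma'$ has one \emph{more} cycle than $\sigma$, namely $n-(k-1)$. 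By the induction hypothesis $\sigma'$ has a unique admissible factorization $(x_2,y_2)\cdots(x_k,y_k)$, necessarily with $x_2>x_1$; prepending $(x_1,y_1)$ yields an admissible factorization of $\sigma$, and the forced value of the first transposition shows it is the only one. This closes the induction.

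The main obstacle is conceptual rather than computational: one must notice that it is the \emph{smallest} index $x_1$, not the largest, that is canonically recoverable, so the induction should strip transpositions off the left while tracking the least moved point; once this is seen, the remaining checks (the action of $\sigma'$ on small symbols and a single use of the cycle merge/split rule) are routine. As an independent sanity check, the number of admissible words is
\[
\sum_{1\le x_1<\cdots<x_k\le n-1}\prod_{i=1}^k (n-x_i)=e_k(1,2,\dots,n-1)=\genfrac{[}{]}{0pt}{}{n}{n-k}=\#B_{n,k},
\]
which matches the target cardinality and would, alternatively, let one conclude the bijection after establishing only injectivity (or only surjectivity) together with the cycle count.
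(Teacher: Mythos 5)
Your proposal is correct, and it shares its core with the paper's proof while handling one half differently. The existence/uniqueness half --- noting that in an admissible word every symbol is at least $x_1$ and $x_1$ occurs only in the first transposition, so that $x_1$ is forced to be the least non-fixed point of $\sigma$ and the first transposition is recoverable from $\sigma$ alone, then stripping it and recursing --- is exactly the paper's argument, up to a harmless convention swap: the paper multiplies from the right and so recovers $y_1=\sigma^{-1}(x_1)$ where you get $y_1=\sigma(x_1)$; since inversion preserves cycle type and transpositions are involutions, the two formulations are equivalent. Where you genuinely diverge is the forward implication (admissible word $\Rightarrow$ $n-k$ cycles): the paper proves this by a separate build-up induction starting from the identity, maintaining the invariant that every element outside $\{\alpha_1,\ldots,\alpha_{i-1}\}$ is the maximum of its cycle, which guarantees each new transposition merges two distinct cycles; you instead invoke the standard fact that multiplying by a transposition changes the cycle count by exactly one, merging precisely when the symbols lie in distinct cycles --- immediate here since $x_1$ is a fixed point of the tail product. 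Your route is shorter and avoids the ad hoc maxima invariant, at the price of importing the merge/split rule as a known fact (the paper's invariant in effect re-proves the half of it that is needed); your counting identity $e_k(1,\ldots,n-1)=\genfrac{[}{]}{0pt}{}{n}{n-k}$ is a further alternative absent from the paper. One small presentational point: in the inductive step you apply the cycle rule explicitly only for surjectivity ($\sigma'=(x_1,y_1)\sigma$ gains a cycle because $y_1=\sigma(x_1)$); to close the induction for well-definedness of $\Phi$ into $B_{n,k}$, state the merge case in one line (the tail product fixes $x_1$, so prepending $(x_1,y_1)$ merges two cycles) --- the tools are already in your write-up.
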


\begin{proof}
First of all, given a positive integer $n$, we show the fact that any product of $k<n$ transpositions with the properties described in the statement is the product of $n-k$ disjoint cycles by induction on $k$. Given a positive integer $i<n$, consider a permutation $\tau\in\S_n$ of the form $\tau=(\alpha_1,\beta_1)\cdot\ldots\cdot(\alpha_{i-1},\beta_{i-1})$ where $\alpha_1<\alpha_2<\ldots<\alpha_{i-1}$ and $\alpha_j<\beta_j$ for $1\leq j\leq i-1$; assume that $\tau$ has $n-i+1$ cycles and that each $j$ in $\{1,\ldots,n\}\setminus\{\alpha_1,\ldots,\alpha_{i-1}\}$ is the maximum element of the cycle it belongs to in the factorisation of $\tau$ as a product of disjoint cycles. Consider any permutation of the form $\tau\cdot (\alpha_i,\beta_i)$ where $n\geq \beta_i>\alpha_i>\alpha_{i-1}$. The elements $\alpha_i,\beta_i$ must belong to separate cycles in $\tau$ and each must be the maximum of its own cycle, as neither can be in $\{\alpha_1,\ldots,\alpha_{i-1}\}$. Denote by $c_\alpha$ the cycle of $\tau$ containing $\alpha_i$ and by $c_\beta$ the one containing $\beta_i$; the disjoint cycles of the permutation $\tau\cdot(\alpha_i,\beta_i)$ are those of $\tau$, with the cycles $c_\alpha$ and $c_\beta$ merged into one.

Note that the property that elements not in $\{\alpha_1,\ldots,\alpha_i\}$ are maximal in their cycles holds for $\tau\cdot(\alpha_i,\beta_i)$. Indeed, if some element is not in $\{\alpha_1,\ldots,\alpha_{i-1}\}$ and does not belong to $c_\alpha$ or $c_\beta$, then it is maximal in its cycle in $\tau$, and thus it is maximal in the same cycle of $\tau\cdot(\alpha_i,\beta_i)$. The elements $\alpha_i$ and $\beta_i$, which do not appear in the list $\{\alpha_1,\ldots,\alpha_{i-1}\}$, are the maximums of the cycles $c_\alpha$ and $c_\beta$, respectively. It follows that the element $\beta_i$ is maximal in its cycle of $\tau\cdot(\alpha_i,\beta_i)$, which is the union of $c_\alpha$ and $c_\beta$. Finally, suppose by contradiction that an element of the new cycle other than $\beta_i$ belongs to $\{1,\ldots,n\}\setminus\{\alpha_1,\ldots,\alpha_{i}\}$; this would imply that it was maximal in its cycle in $\tau$, and therefore in $c_\alpha$ or $c_\beta$; it would thus belong to $\{\beta_i,\alpha_i\}$, which we have excluded.

It follows from the fact that the identity of $\S_n$ (the product of zero transpositions) consists of $n$ disjoint cycles of length one (and is thus such that each element in $\{1,\ldots,n\}$ is maximal in its cycle) that any permutation of the form described in the statement is the product of $n-k$ disjoint cycles.

Conversely, consider a permutation $\sigma\in\S_n$ made up of $n-k<n$ disjoint cycles; let us determine positive integers $x_1,\ldots,x_k,y_1,\ldots,y_k$ between $1$ and $n$ such that $x_1<\ldots<x_k$, $x_i<y_i$ for $1\leq i\leq k$, and $\sigma=(x_1,y_1)\cdot\ldots\cdot(x_k,y_k)$. The element $x_1$ must necessarily be the smallest one that is not fixed by $\sigma$. The element $y_1$ must be such that $\sigma(y_1)=x_1$, as $x_1$ is not moved by any of the other transpositions. Consequently, we have $y_1=\sigma^{-1}(x_1)>x_1$. We can repeat this procedure recursively on the permutation $(x_1,\sigma^{-1}(x_1))\cdot \sigma$ (which fixes all elements up to and including $x_1$) to obtain the subsequent transpositions $(x_2,y_2),\ldots,(x_k,y_k)$.
\end{proof}

As a result, from our initial RTCN we have constructed a pair given by a ranked binary tree and a permutation in $\S_{\ell-1}$ which, by applying Lemma~\ref{lemma 1} with $n=\ell-1$, we find to be a product of as many disjoint cycles as the number of branching events in the original RTCN (that is, $\ell-1-k$). We have already sketched how we can revert each change made to recover the original RTCN, but we give a more complete description of the inverse construction below.

\paragraph{From a ranked binary tree and a permutation to a RTCN.} We are given a ranked binary tree with $\ell$ labeled leaves and a permutation $\sigma$ in $\S_{\ell-1}$ with $\ell-1-k$ cycles.

First of all, by Lemma~\ref{lemma 1} we can express $\sigma$ in a unique way as a product of transpositions of the form previously described, as $(x_1,y_1)\cdot\ldots\cdot (x_k,y_k)$ with $0<x_1<\ldots<x_k$ and $x_i<y_i\leq n$ for $1\leq i\leq k$. We then obtain our pairs $(a_i,b_i)_{i=1}^k$ as $a_i=x_i$, $b_i=y_i-x_i$.

All that is left to do is replace the branching events $a_k,\ldots, a_1$ in our ranked binary tree (numbered from the bottom) by reticulation events. We can still number our lineages from $1$ to $\ell-i$ above branching event $i$ (again, think of the order induced by assigning to the parent the label of the smaller child).

For $i=k,\ldots,1$ in turn, consider lineages $1$ to $\ell-a_i$ right above branching event $a_i$ and let $l$ be the label assigned to the parent of the branching event $a_i$; pick the past of lineage $b_i+s$ where $s=1$ if $b_i\geq l$ and $s=0$ otherwise, to become the past of the larger child issued from the branching event (while the past of the branching event becomes the past of the smaller child), and make the future of lineage $b_i+s$ into a hybridised version of the two lineages involved in the branching event.

This operation yields a RTCN and is precisely the inverse of the one previously described (see again Figure~\ref{fig:example of 3-step bijection} where one can follow the steps backwards to recover the RTCN from the final ranked binary tree).

\section{Ranked tree containment and phylogenetic trees}\label{tree-con}

In this final section, we give a bijection between the set of RTCNs which contain a fixed ranked tree and the set of phylogenetic trees; see the last two paragraphs of Section~\ref{intro}.

We first recall how all RTCNs which contain a fixed ranked tree $T$ are constructed from $T$; see \cite{BiLaSt} where this was used to prove (\ref{num-tree-contain}). Starting from the leaves and moving back in time, for every branching event of $T$ a decision is taken whether or not the branching event is turned into a reticulation event and if yes, which of the (future) lineages not in the branching event is connected with which of the (future) lineages from the branching event; see Figure~\ref{ranked-tree}-(b) for an example. We will call the resulting RTCN $N$ and will directly translate each step in the above process of building $N$ from $T$ into a corresponding step for building a phylogenetic tree (time will, however, be reversed).

\vspace*{0.2cm}
\begin{figure}[h!]
\centering
\includegraphics{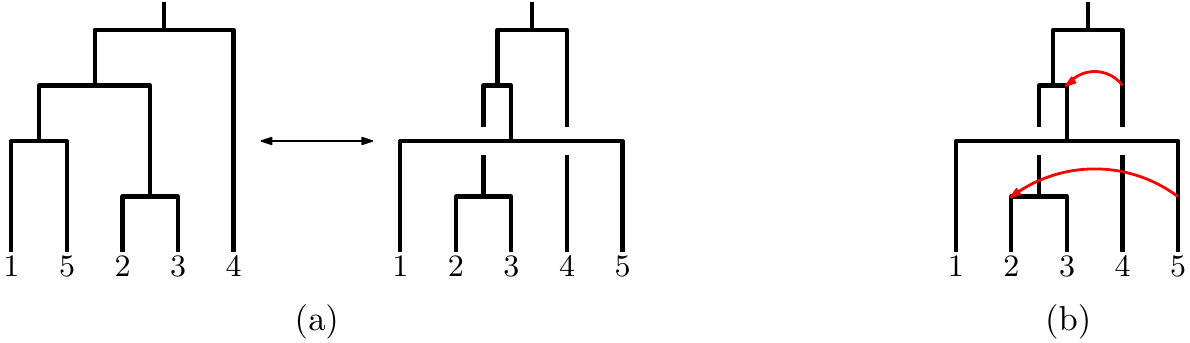}
\caption{(a) A ranked tree $T$ on $6$ leaves which is redrawn such that the leaf labels are in increasing order;  (b) A RTCN $N$ containing the ranked tree.}\label{ranked-tree}
\end{figure}
\vspace*{0.2cm}

In order to describe this, we first draw the ranked tree $T$ so that all the leaves are in increasing order when read from left to right; see Figure~\ref{ranked-tree}-(a). Then, we consider the resulting $N$ (with the same order of the leaves). Now, starting from a tree consisting of a root with one child labeled by $1$, we build the (rooted) tree $\tau$ with leaf labeled by $1,\ldots,\ell$ and internal nodes labeled by $\overline{1},\ldots,\overline{\ell-1}$ where $\ell$ is the number of labels of $T$ and we assume that $1<\cdots<\ell<\overline{1}<\cdots<\overline{\ell-1}$, by processing the events from $N$ top-down. More precisely, for the $k$-th event in $N$, we do the following:
\begin{description}
\item[(a)] If the $k$-th event is a branching event, we insert a node with label $\overline{k}$ into the root edge of $\tau$ and attach to this node a leaf with label $k+1$;
\item[(b)] If the $k$-th event is a reticulation event that was created from the $k$-th branching event of $T$ by connecting the $i$-th (future) lineage (not counting the lineages of the $k$-th branching event), then, we insert a node with label $\overline{k}$ into one of the edges to the children of the internal node with label $\overline{i}$ of $\tau$ where the child with the smaller (larger) label is chosen depending on whether the left or the right lineage of the $k$-th branching event of $T$ was used to form the reticulation event; moreover, we again attach a leaf with label $k+1$ to the inserted node.
\end{description}
See Figure~\ref{grow-T} for a visualization of the the above construction for the RTCN from Figure~\ref{ranked-tree}-(b).

\begin{figure}[t!]
\centering
\includegraphics{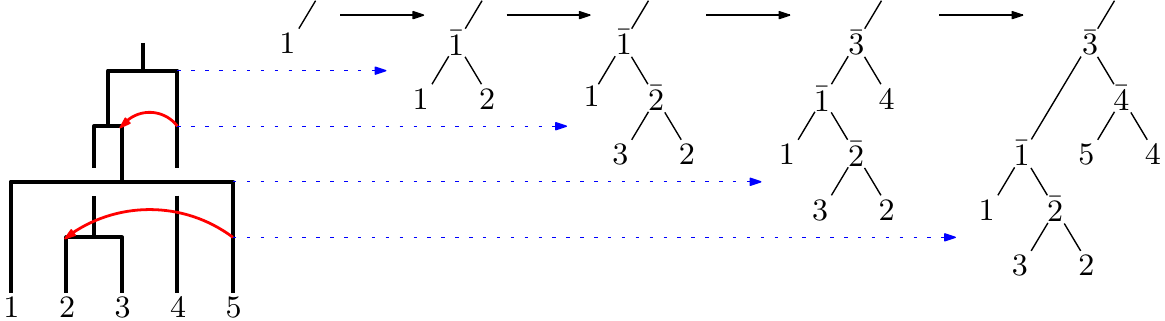}
\caption{The rooted labeled tree constructed from the RTCN from Figure~\ref{ranked-tree}-(b) which contains the ranked tree from Figure~\ref{ranked-tree}-(a).}\label{grow-T}
\end{figure}

Next, we remove the labels of all international nodes and the root edge of $\tau$. The resulting tree is a phylogenetic tree; see the tree on the right of Figure~\ref{bijection}. This tree is the image of $N$.

\vspace*{0.2cm}
\begin{figure}[h!]
\centering
\includegraphics[scale=0.9]{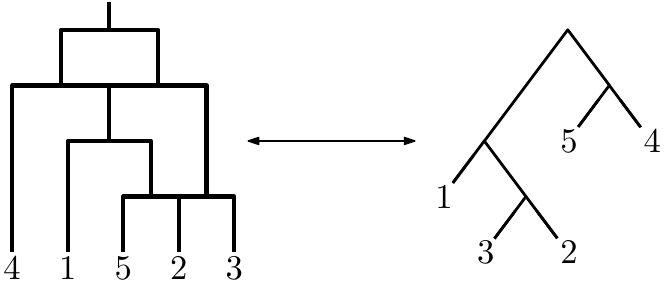}
\caption{The image of the RTCN from Figure~\ref{ranked-tree}-(b) under the bijection.}\label{bijection}
\end{figure}
\vspace*{0.2cm}

Finally, it is not hard to see that the above construction is reversible: e.g., if the phylogenetic tree on the right of Figure~\ref{bijection} is given, find the largest label ($5$); its parent will have label $\overline{4}$. Then, remove the largest label, the parent label and the edge which joins them. Continue until all internal nodes have received a label (where the number of the label is decreased by $1$ in every step). This gives the right-most tree in Figure~\ref{grow-T} which unambiguously encodes the whole tree construction process from Figure~\ref{grow-T}. Clearly, this process can be used to re-construct $N$.

\section*{Acknowledgments} We thank Fran\c{c}ois Bienvenu for putting us into contact. Moreover, we thank the two anonymous referees for a careful reading. The second author acknowledges financial support by the Ministry of Science and Technology, Taiwan under the research grant MOST-109-2115-M004-003-MY2; the third author was partially supported by the same funding agency under the grant MOST-110-2115-M-017-003-MY3.

\end{document}